\documentclass[11pt]{amsart}
\usepackage[T1]{fontenc}
\usepackage{lmodern}
\usepackage{microtype}
\usepackage{mathtools,amssymb}
\usepackage[margin=1in]{geometry}
\usepackage[hidelinks]{hyperref}

\usepackage[dvipsnames, table]{xcolor}
\usepackage{amsthm}
\theoremstyle{plain}
\usepackage{amssymb}
\usepackage{marvosym}
\usepackage{bm}
\usepackage{mathrsfs}
\usepackage{enumerate}  
\usepackage{mathtools}
\usepackage{tikz-cd}
\usepackage{graphicx}
\usepackage{float}
\usepackage[titletoc,title]{appendix}
\usepackage{marginnote}
\usepackage[disable]{todonotes}
\usepackage{etoolbox}
\usepackage[all]{xy}
\makeatletter
\patchcmd{\Ginclude@eps}{"#1"}{#1}{}{}
\makeatother
\usepackage[outdir=./]{epstopdf}
\usepackage[numbers]{natbib}
\usepackage[utf8]{inputenc}
\usepackage[english]{babel}
\usepackage{changepage}
\usepackage{makecell}
\usepackage{enumitem}

\definecolor{lightblue}{HTML}{1F88CD}
\definecolor{lightgrey}{HTML}{727272}
\definecolor{lightblue2}{HTML}{009EC1}
\definecolor{mypink}{HTML}{FD00B0}
\definecolor{lightred}{HTML}{ff4d4d}

\hypersetup{
    colorlinks=true,
    linkcolor={OliveGreen},
    citecolor={blue},
	urlcolor={black},
    pdftitle={Reconstruction of Fano manifolds from the orthogonal to an exceptional coherent sheaf},
  pdfsubject={A dg-algebra proof of categorical reconstruction}
}

\numberwithin{equation}{section}
\newtheorem{theorem}{Theorem}[section]
\newtheorem{lemma}[theorem]{Lemma}

\newtheorem{corollary}[theorem]{Corollary}
\newtheorem{conjecture}[theorem]{Conjecture}

\theoremstyle{definition}

\theoremstyle{definition}
\newtheorem{remark}[theorem]{Remark}

\newcommand{\cA}{\mathcal A}
\newcommand{\cK}{\mathcal K}
\newcommand{\cQ}{\mathcal Q}

\newcommand{\OO}{\mathcal O}
\newcommand{\C}{\mathbb C}
\newcommand{\Z}{\mathbb Z}
\newcommand{\PP}{\mathbb P}
\newcommand{\D}{\mathrm D}
\newcommand{\Db}{\mathrm D^{\mathrm b}}
\newcommand{\R}{\mathrm R}
\newcommand{\Lder}{\mathrm L}
\newcommand{\Perf}{\operatorname{Perf}}
\newcommand{\RHom}{\operatorname{RHom}}
\newcommand{\Hom}{\operatorname{Hom}}
\newcommand{\HH}{\operatorname{HH}}
\newcommand{\Nat}{\operatorname{Nat}}
\newcommand{\cone}{\operatorname{cone}}
\newcommand{\thick}{\operatorname{thick}}
\newcommand{\id}{\operatorname{id}}
\newcommand{\rk}{\operatorname{rk}}

\title[Reconstruction from exceptional sheaf orthogonals]{Reconstruction of Fano manifolds from the orthogonal to an exceptional coherent sheaf}
\date{October 14, 2026}
\subjclass[2010]{Primary 14F08; Secondary 14J45.}
\keywords{Derived categories, Fano manifolds, exceptional coherent sheaves, reconstruction theorem.}

\author{Songtao Ma}
\address{Department of Mathematics, University of Michigan - Ann Arbor, Ann Arbor, Michigan 48104, United States}
\email{skenma@umich.edu}

\begin{document}

\begin{abstract}
Let $X$ and $Y$ be smooth complex Fano manifolds of dimension at least
three with very ample anticanonical bundles, and let $\mathcal{E}_X$ and $\mathcal{E}_Y$
be exceptional coherent sheaves on $X$ and $Y$, respectively.
We prove that a Fourier--Mukai equivalence
$\mathcal{E}_X^\perp\simeq \mathcal{E}_Y^\perp$ implies $X\simeq Y$.
\end{abstract}

\maketitle

\section{Introduction}

The reconstruction theorem of Bondal and Orlov states that a smooth
projective variety with ample canonical or anticanonical bundle is
determined by its bounded derived category of coherent sheaves
\cite{BondalOrlov}. For a Fano manifold, one may ask whether the entire
derived category is needed. A natural formulation is obtained by
removing an exceptional coherent sheaf. Let $\mathcal{E}_X$ be a coherent sheaf on
$X$ satisfying $\RHom_X(\mathcal{E}_X,\mathcal{E}_X)\simeq\C$. Its right orthogonal
\[
 \mathcal{E}_X^\perp
 =\{F\in\Db(X)\mid\RHom_X(\mathcal{E}_X,F)=0\}
\]
fits into the semiorthogonal decomposition
\[
 \Db(X)=\langle \mathcal{E}_X^\perp,\mathcal{E}_X\rangle.
\]
An equivalence between such orthogonals does not formally give an
equivalence of the ambient derived categories: one must also account for
the gluing to the exceptional object. In the special case
$\mathcal{E}_X=\OO_X$, the following conjecture, attributed to Alexander Perry,
predicts that the orthogonal already determines the variety; see for
example \cite[Conjecture 1.1]{LinRampazzoZhang}.

\begin{conjecture}
Let $X$ and $Y$ be smooth complex Fano manifolds. If there is a
$\C$-linear enhanced equivalence
$\OO_X^\perp\simeq\OO_Y^\perp$, then $X\simeq Y$.
\end{conjecture}

Here an enhanced equivalence means a Morita equivalence of the standard
dg enhancements. In dimension at least three, we prove the following
stronger very ample statement.

\begin{theorem}\label{thm:main}
Let $X$ and $Y$ be smooth complex Fano manifolds of dimension at least
three with very ample anticanonical bundles. Let $\mathcal{E}_X$ and $\mathcal{E}_Y$ be
exceptional coherent sheaves on $X$ and $Y$, respectively. If
$\mathcal{E}_X^\perp \simeq \mathcal{E}_Y^\perp$ by a Fourier-Mukai type equivalence,
then $X\simeq Y$.
\end{theorem}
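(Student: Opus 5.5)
The plan is to reconstruct the anticanonical ring of $X$ intrinsically from the orthogonal $\cA_X:=\mathcal{E}_X^\perp$, together with its Serre functor and the Fourier--Mukai enhancement. Very ampleness of $\omega_X^{-1}$ then recovers $X$ as $\operatorname{Proj}$ of that ring. The starting point is that $\cA_X$ is an admissible subcategory of $\Db(X)$, so it has a Serre functor
\[
 S_{\cA_X}=\mathbf{L}_{\mathcal{E}_X}^{\,?}\circ S_X|_{\cA_X},
\]
which is a mutation composed with $S_X=(-)\otimes\omega_X[n]$. Concretely, $S_{\cA_X}^{-1}=\mathbf{R}\circ(-\otimes\omega_X^{-1})[-n]$, where $\mathbf R$ is the right mutation through $\mathcal{E}_X$, i.e.\ the projection to $\mathcal{E}_X^\perp$. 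Because the given equivalence $\Phi\colon\cA_X\simeq\cA_Y$ is of Fourier--Mukai type, it lifts to dg enhancements, so it intertwines Serre functors (these are unique) and induces an isomorphism of Hochschild cohomology and of natural transformation spaces. In particular, for every $k$ we get
\[
 \Nat(\id_{\cA_X},S_{\cA_X}^{-k}[kn])\;\simeq\;\Nat(\id_{\cA_Y},S_{\cA_Y}^{-k}[kn]),
\]
and these isomorphisms are compatible with composition, so they give a graded ring isomorphism.

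The key step is to identify this graded ring with $\bigoplus_k H^0(X,\omega_X^{-k})$, at least in the large degrees that determine $\operatorname{Proj}$. I would compute it via kernels. The kernel of $S_{\cA_X}^{-1}[n]$ is the projection of $\Delta_*\omega_X^{-1}$ to $\cA_X\boxtimes\cA_X^{\vee}$, i.e.\ a cone involving $\mathcal{E}_X^\vee\boxtimes(\mathcal{E}_X\otimes\omega^{-1})$-type correction terms. Hochschild-type natural transformations are computed by $\RHom$ of the kernel of $\id_{\cA}$, namely the projection $P$ of $\OO_\Delta$, into that kernel. I would expand $P$ through the triangle $\mathcal{E}_X^\vee\boxtimes\mathcal{E}_X\to\OO_\Delta\to P$ (suitably dualized). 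Then $\RHom(\OO_\Delta,\Delta_*\omega^{-k})$ contributes $H^*(X,\omega^{-k})\oplus\ldots$, i.e.\ HKR terms $H^{*}(\wedge^pT_X\otimes\omega^{-k})$, whose degree-$0$ part is exactly $H^0(\omega_X^{-k})$. The correction terms are $\RHom$'s between $\mathcal{E}_X$ and $\mathcal{E}_X\otimes\omega^{-k}$ in various twists.

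The main obstacle is controlling these correction terms. They must not destroy the degree-$0$ piece, and the ring structure must match the multiplication of sections. I would first try to show that the composite map $H^0(\omega_X^{-k})\to\Nat(\id_{\cA_X},S^{-k}_{\cA_X}[kn])$, obtained by restricting a natural transformation $\id\to-\otimes\omega^{-k}$ to $\cA_X$ and then projecting, is injective and multiplicative. Injectivity should come from the fact that $\cA_X$ contains skyscraper-like objects: for a point $x$, the projection $\mathbf R(\OO_x)$ differs from $\OO_x$ by a copy of $\mathcal{E}_X$ tensored with a finite-dimensional space. So a section vanishing on $\cA_X$ vanishes at every point. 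This is where $\dim X\ge3$ should enter. It makes the Ext-groups $\operatorname{Ext}^{i}(\mathcal{E}_X,\mathcal{E}_X\otimes\omega^{-k})$ appearing in the correction lie in degrees that cannot hit the degree-$0$ slot, using Kodaira-type vanishing for the twists and the shift $[kn]$ with $n\ge3$. It also makes the cokernel vanish or be nilpotent, so it does not affect $\operatorname{Proj}$.

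Granting this, the graded rings $R_X=\bigoplus_k H^0(\omega_X^{-k})$ and $R_Y$ become isomorphic, possibly only up to nilpotents or finite-dimensional discrepancies in low degree. Here the enhancement is needed: it ensures the identification $S_{\cA}^{-k}=(S_{\cA}^{-1})^k$ is coherent, so the multiplication is matched. Since $\omega_X^{-1}$ is very ample, we have $X\simeq\operatorname{Proj}R_X$, and $\operatorname{Proj}$ is insensitive to changes in finitely many degrees and to nilpotent ideals. Hence $X\simeq\operatorname{Proj}R_X\simeq\operatorname{Proj}R_Y\simeq Y$. An alternative route, if this ring comparison fails, is to characterize point-like objects $\mathbf R(\OO_x)$ in $\cA_X$ intrinsically (they behave as shifted point objects for $S_{\cA}$ up to the $\mathcal{E}$-correction), in the style of Bondal--Orlov. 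I would keep that in reserve.
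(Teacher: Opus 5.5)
Your proposal has a genuine gap at its central step. You need $\bigoplus_k\Nat(\id_{\cA_X},S_{\cA_X}^{-k}[kn])$ to agree with $\bigoplus_k H^0(X,\omega_X^{-k})$ in large degrees. In fact the comparison is only controlled in a bounded range of \emph{low} degrees, and it can fail completely in large degrees. This is the opposite of what a $\operatorname{Proj}$ argument needs.

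The correction terms grow with $k$. The kernel of $(\mathbf L_{\mathcal E}\circ(-\otimes\omega_X^{-1}))^k$ has a filtration (note that the projection onto $\mathcal E^\perp$ is the left mutation, not the right one). Its first piece is $\Delta_*\omega_X^{-k}$, and each further piece contains $j$ external factors, $1\le j\le k+1$, with a shift $[j]$. Applying $\RHom(\Delta_*\OO_X,-)$ to the $j$th piece gives $\RHom_X(\mathcal E,\mathcal E\otimes\omega_X^{-c})[j-n]$, tensored with complexes in nonnegative degrees. For $\mathcal E=\OO_X$, Kodaira vanishing places this exactly in degree $n-j$. So neither the shift $[kn]$ nor vanishing theorems keep the corrections away from the degree-$0$ slot. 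For $k\ge n-1$ the piece $j=n$ reaches degree $0$, and for $k\ge n$ the piece $j=n+1$ reaches degree $-1$, where it can kill sections. Consequently $\dim X\ge3$ only yields $H^0(\omega_X^{-k})\cong\Nat(\id,S^{-k}[kn])$ for $k\le n-2$, and injectivity for $k\le n-1$. This is exactly the estimate in Lemma~\ref{lem:kernel}.

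The failure is real, not an artifact of the estimate. Take $X=\PP^n$ and $\mathcal E=\OO_X$. Then $\cA_X=\langle\OO(-n),\dots,\OO(-1)\rangle\simeq\Db(\Lambda)$ for a directed algebra $\Lambda$ on $n$ vertices, so $\operatorname{gldim}\Lambda\le n-1$. Write $D\Lambda=\Hom_\C(\Lambda,\C)$. The functor $S^{-1}=\RHom_\Lambda(D\Lambda,-)$ raises top cohomological degree by at most $n-1$, so the bimodule representing $S^{-k}[kn]$ lies in $\D^{\le -k}$. Since $\Lambda$ has projective dimension at most $n-1$ over $\Lambda^e$, we get $\Nat(\id,S^{-k}[kn])=0$ for all $k\ge n$. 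Your ring is therefore finite-dimensional and has empty $\operatorname{Proj}$, although $X=\PP^n$.

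Restricting to the trustworthy low degrees does not rescue the idea either. For a quartic threefold, $\omega_X^{-1}=\OO_X(1)$ and $n=3$, so only degree one is controlled (and degree two only injectively). The defining equation, however, lives in degree four.

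The missing idea is to extract the geometry from degree one alone, which is what the paper does:
\begin{enumerate}
\item It uses only $H^0(X,\omega_X^{-1})\cong H^0(C_1)$.
\item It realizes $\RHom_{D_s}(E_s,E_s)$ as a cone of $\theta_s\colon P\to A$.
\item It shows (Lemma~\ref{lem:comparison}) that this dg algebra under $A$ is determined by $\theta_s$. This needs only the vanishings $H^{-1}(C_1)=H^{1-2m}(C_m)=H^{2-2m}(C_m)=0$, which the estimate does supply for $n\ge3$.
\item Its perfect category is $\Perf(D_s)$ for general $s$, so smoothness of anticanonical divisors becomes categorical.
\item The linear identification of section spaces then carries $X^\vee$ to $Y^\vee$, and biduality gives $X\simeq Y$.
\end{enumerate}

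Your fallback via point-like objects would also need new input. The object $S_{\cA_X}^{-1}[n]\,\mathbf L_{\mathcal E}(\OO_x)$ fits in a triangle with $\mathbf L_{\mathcal E}(\OO_x)$ and $\RHom_X(\mathcal E,\OO_x)\otimes\mathbf L_{\mathcal E}(\mathcal E\otimes\omega_X^{-1})$. So these objects are in general not point objects in the Bondal--Orlov sense.
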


For structure sheaf orthogonals, the dimension restriction can be removed.

\begin{corollary}\label{cor:structure-sheaf}
Let $X$ and $Y$ be smooth complex Fano manifolds with very ample
anticanonical bundles. If $\OO_X^\perp\simeq\OO_Y^\perp$ by a Fourier-Mukai type equivalence, then $X\simeq Y$.
\end{corollary}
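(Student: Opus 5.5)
The proof reduces the corollary to Theorem~\ref{thm:main} in dimension at least three and treats small dimensions directly. The guiding idea is that $\OO_X^\perp$ remembers $\dim X$ through its Serre functor, so both varieties have the same dimension. Once that is known, small dimensions can be handled by classification.

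\textbf{Step 1: the dimensions agree.} Write $\cA_X=\OO_X^\perp$. It is an admissible subcategory of $\Db(X)$, and its Serre functor is
\[
S_{\cA_X}=\Lder_{\OO_X}\circ(-\otimes\omega_X)[\dim X].
\]
A Fourier--Mukai type equivalence commutes with Serre functors. I will extract $\dim X$ from $S_{\cA_X}$ through Hochschild homology, which is additive over semiorthogonal decompositions and invariant under Fourier--Mukai equivalence. We get $\HH_\bullet(\cA_X)=\HH_\bullet(X)-\C$, with the removed $\C$ sitting in degree $0$. Since $X$ is Fano, $h^{p,0}=0$ for $p>0$, so by Hodge symmetry the outermost degrees $\pm\dim X$ of $\HH_\bullet(X)$ contain $h^{\dim X,0}=0$. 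This means the range of $\HH_\bullet$ does not see the dimension directly.

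I therefore use the Serre dimension of $\cA_X$ instead. For $F\in\cA_X$ supported at a point, the object $\Lder_{\OO_X}(F\otimes\omega_X)$ differs from $F$ by a cone over $\RHom(\OO_X,F)\otimes\OO_X$. Hence the upper Serre dimension of $\cA_X$ equals $\dim X$, computed by testing on $\OO_x$ projected into $\cA_X$ and using the $\Ext$-amplitude. I expect this step to be the main obstacle and will check it carefully.

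A fallback is available. The Euler pairing on $K_0(\cA_X)$ together with the induced Serre operator gives the characteristic polynomial of the Serre operator on $K_0(X)_\C$. That operator is unipotent up to the sign $(-1)^{\dim X}$, and removing $[\OO_X]$ preserves the sign. This recovers $\dim X$ modulo $2$, and combined with a $\dim\le 2$ analysis it may be enough.

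\textbf{Step 2: $\dim\ge3$.} Here the corollary is exactly Theorem~\ref{thm:main} with $\mathcal{E}_X=\OO_X$ and $\mathcal{E}_Y=\OO_Y$. These are exceptional because $H^i(\OO_X)=0$ for $i>0$ by Kodaira vanishing.

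\textbf{Step 3: $\dim\le2$.} In dimension $1$ the only Fano curve is $\PP^1$, so there is nothing to prove. In dimension $2$, $X$ and $Y$ are del Pezzo surfaces of degree $d=K^2\ge3$, since $-K$ is very ample. The rank of $K_0(\cA_X)=\Z^{\rk}$ is $12-d-1$, so it determines $d$. Del Pezzo surfaces with $d\ge 3$ and $d\ne 8$ are unique up to deformation, but not up to isomorphism: for $d\le4$ there are moduli. I will distinguish them using the numerical lattice $K_0(\cA_X)$ with the Euler form. That recovers $(\mathrm{Pic}X,K_X)$, and with it the case of $\PP^1\times\PP^1$ versus $\mathbb F_1$ at $d=8$.

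Pinning down the isomorphism class for $d=3,4$ (cubic surfaces, and intersections of two quadrics) requires more. My plan is to recover $X$ inside $\cA_X$ as a moduli space of point-like objects. Concretely, take the images $\Lder_{\OO_X}\OO_x$ and characterize them intrinsically among objects of $\cA_X$ by their $\Ext$-algebra against the Serre functor, mimicking Bondal--Orlov. This is the step I expect to be hardest if Step 1 alone does not settle small dimensions.
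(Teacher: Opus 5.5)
There are two genuine gaps: Step~1 does not establish that the dimensions agree, and the cases $d=3,4$ in dimension two are not handled.

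\textbf{Step 1.} Your invariant does not compute $\dim X$, and the reduction to Theorem~\ref{thm:main} needs exactly this step. Step~2 and the dimension-one case are fine once the dimensions are known to agree.

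\begin{itemize}
\item Take $X=\PP^2$. Then $\OO_X^\perp=\langle\OO(-2),\OO(-1)\rangle$ is $\Db$ of the Kronecker quiver with three arrows. This category is hereditary of non-Dynkin type, so $S^{-k}$ sends the regular module to a preprojective module shifted by $[-k]$. Its upper Serre dimension is therefore $1$, not $2$.
\item Your test-object heuristic has nothing to test on. For a complex with zero-dimensional support, $\R\Gamma$ computes the global sections of its cohomology sheaves. Hence $\OO_X^\perp$ contains no nonzero such object, and $\Lder_{\OO_X}\OO_x=I_x[1]$ is not of this kind.
\item Your formula for the Serre functor is inverted. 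It should be $S_{\cA_X}=\iota^!\,T_{\omega_X}\,\iota\,[n]$, a right mutation through $\omega_X$; the composite $\Lder_{\OO_X}\circ T_L[-n]$ is its inverse.
\item The fallback also fails on this example. In the basis $\OO(-2),\OO(-1)$ the Gram matrix of $\chi$ is unitriangular with off-diagonal entry $3$. The Serre operator on $K_0(\cA_X)$ then has characteristic polynomial $t^2+7t+1$, so it is not unipotent up to any sign.
\item Even a valid parity invariant could not separate a del Pezzo surface from a Fano fourfold. You must do that before invoking Theorem~\ref{thm:main}.
\end{itemize}

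The missing idea is the invariant of Lemma~\ref{lem:dimension}: the least $d$ admitting a nonzero natural transformation $S_{\cA_X}\to\id[d]$, i.e.\ with $\Hom_{\D(A^e)}(A^\vee,A[d])\ne0$. It is intrinsic to the enhanced category. The kernel comparison in Lemma~\ref{lem:kernel} and Corollary~\ref{cor:sections} gives $C_1\in\D^{\ge0}(\C)$ and $0\ne H^0(X,\omega_X^{-1})\hookrightarrow H^0(C_1)$ for $n\ge2$. So the minimum is exactly $n$. It is $0$ for $\PP^1$, where $\cA_X\simeq\Perf(\C)$, and $\cA_X=0$ for a point.

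\textbf{Dimension two.} Reading off $d$ from $\rk K_0(\cA_X)=11-d$ is fine; the paper uses $\dim\HH_0(\cA_X)=11-K_X^2$. The cases $d=3,4$ remain open in your proposal.

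\begin{itemize}
\item Your Bondal--Orlov plan does not apply as stated, because the objects $I_x[1]$ are not point objects of $\cA_X$. Since $\RHom(I_x,\OO_X)\simeq\C\oplus\C[-1]$ has Euler characteristic zero, one gets $[S_{\cA_X}(I_x)]=[I_x\otimes\omega_X]=[\omega_X]-[\OO_x]\ne\pm[I_x]$ in $K_0(X)$. So $S_{\cA_X}$ does not act on these objects by a shift.
\item These two cases are substantial categorical Torelli theorems, and the paper does not reprove them. It cites \cite[Corollary 1.6]{LinZhang} for cubic surfaces and \cite[Theorem 3.1]{Elagin} for quartic del Pezzo surfaces. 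You should do the same.
\item For $d=8$ your lattice idea is sound, but recovering $(\operatorname{Pic}X,K_X)$ is more than you need and is unproved. Parity suffices. Using $\chi(F)=0$ on $\cA_X$,
\[
\chi(F,G)+\chi(G,F)=-2\rk(F)\rk(G)+\rk(F)\,K_X\cdot c_1(G)+\rk(G)\,K_X\cdot c_1(F)-2c_1(F)\cdot c_1(G).
\]
This is always even on $\PP^1\times\PP^1$. On $\mathbb F_1$ the acyclic objects $I_x$ and $\OO_C(-1)$, with $C$ the exceptional curve, give $K_X\cdot C=-1$.
\end{itemize}
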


\begin{remark}
    Actually, one can drop the assumption that the equivalence is of Fourier-Mukai type by using the technique developed in \cite{LiPertusiZhaoEnh}. See also \cite[Remark 1.7]{LinZhangFano}.
\end{remark}

Unless stated otherwise, tensor products, Hom complexes, pullbacks, and pushforwards are derived. We use the standard dg enhancements of derived categories and their admissible subcategories, and work with enhanced lifts of Fourier–Mukai equivalences. All dg algebras are unital, and their morphisms preserve units.

\subsection{Related Work} For structure sheaf orthogonals, several cases are covered by earlier
categorical Torelli results.
\cite{Elagin} proves reconstruction for quartic del Pezzo surfaces. More recently, \cite{LinRampazzoZhang} treats del Pezzo
surfaces of degrees one through four , while
\cite{LinZengZhang} proves reconstruction results for index-one
hypersurfaces and certain double covers.
Our argument in dimension at least three does not require a hypersurface presentation or a classification of Fano manifolds.

Categorical Torelli theorems have been established for several
classes of Fano threefolds and fourfolds.
\cite{BMMS} proved that a smooth cubic
threefold is determined by its Kuznetsov component.
\cite{FeyzbakhshLiuZhang} gave a unified treatment for del Pezzo threefolds of Picard rank one and degrees $2$, $3$, and $4$. 

For Gushel--Mukai threefolds, \cite{JacovskisLinLiuZhang}
birational and refined categorical Torelli results.
More generally, \cite{JacovskisLiuZhang} proved a refined categorical Torelli theorem for index-one prime Fano threefolds of genus $g\ge6$. For cubic fourfolds, \cite{HuybrechtsRennemo} proved reconstruction from the Kuznetsov component equipped with its degree-shift functor. Related reconstruction results for Fano hypersurfaces were
obtained in \cite{Pirozhkov} and \cite{LinZhang}.
In particular, the index-one hypersurface case includes smooth quartic threefolds and quintic fourfolds.

After publishing the first version of this paper, the author was informed that Lin and Zhang had also obtained the same result using a similar method, see \cite[Theorem 1.5]{LinZhangFano}.

\subsection{Acknowledgment} The Author thanks Chunyi Li, Zhiyu Liu and Alexander Perry for useful discussion and advices. In particular, the original version of this paper merely proved Corolloary \ref{cor:structure-sheaf}. Due to Chunyi Li's suggestion, the author found that the result can be easily generalized to arbitrary exceptional coherent sheaves. Moreover, conjectures in chapter \ref{sec:further-directions} are also suggested by Chunyi Li.

\subsection{AI Disclosure} The paper is a result of many rounds of discussions with ChatGPT 6 Astra. The majority of the paper is written by the author, while AI tools are involved in proof-reading and revision of the draft. The author is fully responsible for the correctness and accuracy of all mathematical content. 

\section{Anticanonical sections}

From now on, we denote $\cA_X := \mathcal{E}_X^\perp$. Assume that $n=\dim X\ge2$, and write $\mathcal{E}=\mathcal{E}_X$ and
$L=\omega_X^{-1}$. Also, let
$\iota\colon\cA_X\hookrightarrow\Perf(X)$ and $q=\iota^*$.
Choose a classical generator $E$ of $\cA_X$, and set
\begin{equation}\label{eq:notation}
 A=\RHom_X(E,E),\qquad P=A^\vee[-n],\qquad
 C_m=\RHom_{A^e}(P^{\otimes_A m},A)\quad(m\ge1),
\end{equation}
where $A^e=A\otimes A^{\mathrm{op}}$ and $A^\vee=\RHom_\C(A,\C)$.
The algebra $A$ is smooth and proper. Its dual bimodule represents the
Serre functor $S_{\cA_X}$, so $P$ is invertible in $\D(A^e)$.

Since $X$ is smooth, $\mathcal{E}$ is perfect; write $\mathcal{E}^\vee=\R\mathcal Hom_X(\mathcal{E},\OO_X)$.
Let $T_L=(-)\otimes L$. The projection $Q=\iota q$ is left mutation
through $\mathcal{E}$; its kernel is
\[
 \cQ=\cone(\mathcal{E}^\vee\boxtimes \mathcal{E}\longrightarrow\Delta_*\OO_X)
\]
where $\Delta\colon X\hookrightarrow X\times X$ is the diagonal.
Denote by $\cK_m$ the kernel of $(QT_L)^mQ$.

\begin{lemma}\label{lem:kernel}
The natural morphism
\[
 \alpha_m: \RHom_{X\times X}(\Delta_*\OO_X,\Delta_*L^m)
 \longrightarrow C_m
\]
has cone $R_m: = \text{Cone}(\alpha_m)$ in $\D^{\ge n-m-1}(\C)$.
\end{lemma}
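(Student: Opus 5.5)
The plan is to identify $C_m$ with a Hom space between Fourier--Mukai kernels on $X\times X$, show that $\alpha_m$ is induced by a natural map of kernels $\Delta_*L^m\to\cK_m$, and bound the cone of that map by expanding every occurrence of $Q$ into $\id$ minus the projection onto $\langle\mathcal{E}\rangle$.

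\emph{Step 1: identify $C_m$.} Since $\Db(X)=\langle\cA_X,\mathcal{E}\rangle$, the inverse Serre functor of $\cA_X$ is $S_{\cA_X}^{-1}=q\,S_X^{-1}\iota=qT_L\iota[-n]$. The bimodule $P=A^\vee[-n]$ represents $S_{\cA_X}[-n]$, so $P^{\otimes_A m}$ is the inverse of the bimodule representing $(qT_L\iota)^m$. Hence
\[
 C_m=\RHom_{A^e}(P^{\otimes_A m},A)\simeq\Nat\bigl(\id_{\cA_X},(qT_L\iota)^m\bigr),
\]
computed in the enhancement. Transporting to kernels on $X\times X$ and using $Q^2=Q$, this becomes $\RHom_{X\times X}(\cQ,\cK_m)$, because $\cK_m$ absorbs $\cQ$ on both sides.

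\emph{Step 2: replace $\cQ$ by $\Delta_*\OO_X$.} Apply $\RHom(-,\cK_m)$ to the triangle $\mathcal{E}^\vee\boxtimes\mathcal{E}\to\Delta_*\OO_X\to\cQ$. The extra term is
\[
 \RHom_{X\times X}(\mathcal{E}^\vee\boxtimes\mathcal{E},\cK_m)=\RHom_X\bigl(\mathcal{E},\Phi_{\cK_m}(\mathcal{E})\bigr),
\]
by projection formula and adjunction. This vanishes because $\Phi_{\cK_m}=(QT_L)^mQ$ and $Q(\mathcal{E})=0$. So $C_m\simeq\RHom(\Delta_*\OO_X,\cK_m)$. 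Under this identification $\alpha_m$ is postcomposition with the map $\Delta_*L^m\to\cK_m$ obtained by inserting the unit maps $\id\to Q$ into $T_L^m$. Consequently $R_m\simeq\RHom(\Delta_*\OO_X,\mathcal{R}_m)$, where $\mathcal R_m:=\cone(\Delta_*L^m\to\cK_m)$.

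\emph{Step 3: bound $\mathcal R_m$.} There are $m+1$ copies of $Q$ in $(QT_L)^mQ$. Expanding each via $\Delta_*\OO_X\to\cQ\to\mathcal{E}^\vee\boxtimes\mathcal{E}[1]$ gives a filtration of $\mathcal R_m$ with at most $m+1$ cone steps. Each graded piece is the kernel of a composite $G\circ\Phi_{\mathcal{E}^\vee\boxtimes\mathcal{E}}\circ H$, that is, an object $H^{\mathrm L}$-twisted $\mathcal{E}^\vee\boxtimes G(\mathcal{E})$ shifted by the number of $Q$'s expanded. Using $\Delta^!=\Delta^*(-)\otimes L[-n]$, every piece contributes a term of the form
\[
 \RHom(\Delta_*\OO_X,M\boxtimes N)=R\Gamma(X,M\otimes N\otimes L)[-n].
\]
With $M$ of the form $\mathcal{E}^\vee\otimes(\text{twists})$ this is $\RHom_X(\mathcal{E},N')[-n]$ with $N'$ built from $\mathcal{E}\otimes L^{j}$. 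These complexes lie in $\D^{\ge 0}$ because $\mathcal{E}$ is a sheaf and the intermediate $\RHom(\mathcal{E},\mathcal{E}\otimes L^j)$ factors are in nonnegative degrees. The $-n$ shift is compensated by $\Delta^!$ and by the $[n]$'s in the Serre twists, leaving total degree $\ge n$ minus the number of cone shifts. Since there are at most $m+1$ shifts, each term lies in $\D^{\ge n-m-1}$, and closing under extensions gives $R_m\in\D^{\ge n-m-1}(\C)$.

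\emph{Expected obstacle.} The hard step will be the degree bookkeeping in Step 3. I must check precisely that every graded piece reduces to an $\RHom$ out of the sheaf $\mathcal{E}$ into an object concentrated in nonnegative degrees. I also must check that the $[\pm n]$ from $\Delta^!$ and from the Serre functor cancel exactly and do not leave a net negative shift. Derived tensor products like $\mathcal{E}^\vee\otimes\mathcal{E}$ have negative Tor-amplitude, so I will avoid them by always rewriting each piece as an $\RHom$ before estimating degrees.
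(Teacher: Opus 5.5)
Your proposal is correct and is essentially the paper's proof. You identify $C_m\simeq\RHom_{X\times X}(\cQ,\cK_m)$ by Morita theory, replace $\cQ$ by $\Delta_*\OO_X$ using $\Phi_{\cK_m}(\mathcal{E})=0$, filter $\cK_m$ by expanding each copy of $Q$, and bound each graded piece as $\RHom_X(\mathcal{E},\mathcal{E}\otimes L^{a+b+1})[j-n]$ (tensored with $\RHom$'s between sheaves) with $j\le m+1$.

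One correction to your bookkeeping remark: $\cK_m$ carries no $[n]$'s from Serre twists, since that shift is already absorbed into $P=A^\vee[-n]$ and so $T_L$ is unshifted. The $[-n]$ from $\Delta^!$ is therefore not cancelled; it is exactly what produces the lower bound $n-j$.
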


\begin{proof}
The functor represented by $P$ is
$p=\iota^!T_{\omega_X}\iota=S_{\cA_X}[-n]$.
Its left adjoint, hence its inverse, is $r=qT_L\iota$.
The extension $F\mapsto\iota Fq$ is fully faithful on enhanced
functor categories: restriction along $\iota$ is inverse on morphism
complexes, by $q\iota\simeq\id$ and the adjunction $q\dashv\iota$.
It preserves composition and takes $\id_{\cA_X}$ to $Q$.
Derived Morita theory \cite[Theorem 8.15]{Toen} gives
\begin{equation}\label{eq:kernels}
 \begin{split}
 C_m
 &\simeq\RHom_{A^e}(A,P^{-m})
 \simeq\RHom_{X\times X}(\cQ,\cK_m)\\
 &\simeq\RHom_{X\times X}(\Delta_*\OO_X,\cK_m).
 \end{split}
\end{equation}
The last isomorphism follows from
$\Phi_{\cK_m}(\mathcal{E})=0$ and the fact that
$\RHom_{X\times X}(\mathcal{E}^\vee\boxtimes \mathcal{E},\cK_m)
 \simeq\RHom_X(\mathcal{E},\Phi_{\cK_m}(\mathcal{E}))$.

The triangle
$\Delta_*\OO_X\to\cQ\to \mathcal{E}^\vee\boxtimes \mathcal{E}[1]$
induces a convolution filtration of $\cK_m$ whose first term is
$\Delta_*L^m$. Its remaining graded pieces are
\begin{equation}\label{eq:pieces}
 \begin{gathered}
 \left(\bigotimes_{\nu=1}^{j-1}
       \RHom_X(\mathcal{E},\mathcal{E}\otimes L^{t_{\nu+1}-t_\nu})\right)
 \otimes\bigl((\mathcal{E}^\vee\otimes L^{t_1})
       \boxtimes(\mathcal{E}\otimes L^{m-t_j})\bigr)[j],\\
 0\le t_1<\cdots<t_j\le m,
 \end{gathered}
\end{equation}
for $1\le j\le m+1$.
Indeed, choosing the $j$ external-product factors divides the $m$
tensor operations into intervals. The interior intervals contribute
$\RHom_X(\mathcal{E},\mathcal{E}\otimes L^{t_{\nu+1}-t_\nu})$.
These complexes have no negative cohomology, since $\mathcal{E}$ and
$\mathcal{E}\otimes L^a$ are coherent sheaves concentrated in degree zero.

For $a,b\ge0$, diagonal duality gives
\[
 \Delta^!\bigl((\mathcal{E}^\vee\otimes L^a)\boxtimes(\mathcal{E}\otimes L^b)\bigr)
 \simeq \mathcal{E}^\vee\otimes \mathcal{E}\otimes L^{a+b+1}[-n].
\]
Consequently,
\[
 \begin{aligned}
 &\RHom_{X\times X}\bigl(\Delta_*\OO_X,
       (\mathcal{E}^\vee\otimes L^a)\boxtimes(\mathcal{E}\otimes L^b)\bigr)[j]\\
 &\hspace{3em}\simeq\RHom_X(\mathcal{E},\mathcal{E}\otimes L^{a+b+1})[j-n].
 \end{aligned}
\]
The right-hand side lies in $\D^{\ge n-j}(\C)$.
Applying $\RHom_{X\times X}(\Delta_*\OO_X,-)$ to the filtration proves
the assertion, since $j\le m+1$.
\end{proof}

For $s\in H^0(X,L)$, let $\theta_s\colon P\to A$ be the bimodule
class induced by multiplication
\[
 \RHom_X(E,E\otimes\omega_X)\xrightarrow{\ s\ }\RHom_X(E,E),
\]
using Serre duality on the source, then we have the following properties for $\theta_s$.

\begin{corollary}\label{cor:sections}
For $k<\min\{0,n-m-1\}$, one has $H^k(C_m)=0$.
The section map
\begin{equation}\label{eq:sections}
 H^0(X,L)\longrightarrow H^0(C_1),\qquad s\longmapsto\theta_s,
\end{equation}
is injective for $n\ge2$ and an isomorphism for $n\ge3$.
\end{corollary}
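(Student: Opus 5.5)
The plan is to apply Lemma~\ref{lem:kernel} to the exact triangle
\[
 \RHom_{X\times X}(\Delta_*\OO_X,\Delta_*L^m)\xrightarrow{\alpha_m} C_m\longrightarrow R_m\xrightarrow{+1}
\]
and read off cohomology from its long exact sequence. The source is Hochschild cohomology with coefficients in $L^m$. By the HKR decomposition
\[
 \RHom_{X\times X}(\Delta_*\OO_X,\Delta_*L^m)\simeq\bigoplus_{p}R\Gamma\bigl(X,\wedge^pT_X\otimes L^m\bigr)[-p],
\]
it has no negative cohomology, and its $H^0$ is $H^0(X,L^m)$. Since $R_m\in\D^{\ge n-m-1}(\C)$, the long exact sequence
\[
 H^{k}(\RHom(\Delta_*\OO_X,\Delta_*L^m))\to H^k(C_m)\to H^k(R_m)
\]
gives $H^k(C_m)=0$ whenever $k<0$ and $k<n-m-1$. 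This is the first claim.

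For the section map I take $m=1$, so $R_1\in\D^{\ge n-2}(\C)$. The relevant part of the sequence is
\[
 H^{-1}(R_1)\to H^0(X,L)\xrightarrow{H^0(\alpha_1)}H^0(C_1)\to H^0(R_1).
\]
If $n\ge2$ then $n-2\ge0>-1$, so $H^{-1}(R_1)=0$ and $H^0(\alpha_1)$ is injective. If $n\ge3$ then $n-2\ge1$, so $H^0(R_1)=0$ as well, and $H^0(\alpha_1)$ is an isomorphism.

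The remaining step, and the one I expect to be the main obstacle, is to show that $H^0(\alpha_1)$ applied to $s\in H^0(X,L)=\Hom(\Delta_*\OO_X,\Delta_*L)$ is exactly $\theta_s$. The class $s$ defines the natural transformation $\id\Rightarrow T_L$ given by multiplication by $s$. Under the extension $F\mapsto\iota Fq$ and the identifications \eqref{eq:kernels}, $\alpha_1$ sends it to the induced transformation $\id_{\cA_X}\Rightarrow qT_L\iota=r=p^{-1}$. In bimodule terms this is a class $A\to P^{-1}$, or equivalently $P\to A$.

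I would verify the identification by unwinding it on the generator $E$. Tensoring with $P$ turns the transformation into the map $p\Rightarrow\id$, which is induced by $s\colon \omega_X\to\OO_X$ on $\iota^!T_{\omega_X}\iota\to\iota^!\iota\simeq\id$. Evaluating on $\RHom(E,-)$ gives $\RHom(E,E\otimes\omega_X)\xrightarrow{s}\RHom(E,E)$. Here the source is identified with $P=A^\vee[-n]$ by Serre duality, and this map is precisely the definition of $\theta_s$.

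The remaining care is compatibility of signs and adjunction units with the Morita identification of \cite{Toen}. Since both sides are linear in $s$ and built from the same natural transformation, this should hold up to a nonzero scalar, which does not affect injectivity or bijectivity.
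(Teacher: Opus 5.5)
Your proposal is correct and follows the paper's proof: you use the same long exact sequence from the triangle of Lemma~\ref{lem:kernel}, then identify $H^0(\alpha_1)(s)$ with $\theta_s$ via $a_s\colon\id\to r$ precomposed with $p$. HKR is more than you need there, since $\Delta^*\Delta_*\OO_X\in\D^{\le 0}$ with $\mathcal H^0=\OO_X$ already gives the vanishing of negative cohomology and the value of $H^0$. In the last step the paper fixes $rp\simeq\id$ to be the counit induced by $\iota\iota^!\to\id$ and uses naturality of multiplication by $s$. Your ``up to a nonzero scalar'' hedge is harmless anyway: any discrepancy from the chosen identifications is composition with a fixed isomorphism independent of $s$, so it cannot affect injectivity or bijectivity.
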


\begin{proof}
The complex
$\RHom_{X\times X}(\Delta_*\OO_X,\Delta_*L^m)$
has no negative cohomology, and its zeroth cohomology is
$H^0(X,L^m)$. From the triangle 
\[\RHom_{X\times X}(\Delta_*\OO_X,\Delta_*L^m) \xrightarrow{\alpha_m} C_m \xrightarrow{} R_m \rightarrow \RHom_{X\times X}(\Delta_*\OO_X,\Delta_*L^m)[1],\]
Lemma \ref{lem:kernel} gives the vanishing of $H^k(C_m)$ and the
claimed injectivity or bijectivity for the kernel-defined section map $H^0(\alpha_1)$, and the remaining task is to identify it with \eqref{eq:sections}.

Note that the kernel morphism
$\Delta_*\OO_X\xrightarrow{s}\Delta_*L\to\cK_1$
restricts to $a_s := q(\iota\xrightarrow{s}T_L\iota)\colon\id\to r$.
The counit $\varepsilon: rp\to\id$ is induced by
$\iota\iota^!\to\id_{\Perf(X)}$ and the corresponding transformation $p\to\operatorname{id}_{\mathcal A_X}$ is given, for each
$F\in\mathcal A_X$, by
\[
pF\xrightarrow{\,a_{s,pF}\,}rpF
\xrightarrow{\,\varepsilon_F\,}F.
\]
Due to naturality of multiplication by $s$ and of the projection
counit $\iota\iota^!\to\operatorname{id}_{\operatorname{Perf}(X)}$,
this composite equals
\[
\iota^!\bigl(
\iota F\otimes\omega_X\xrightarrow{s}\iota F
\bigr):
pF\longrightarrow F,
\]
using $\iota^!\iota F\simeq F.$ On the generator, this is precisely the multiplication class $\theta_s$.
\end{proof}

\begin{lemma}\label{lem:dimension}
If $n\ge2$ and $H^0(X,L)\ne0$, then
\begin{equation}\label{eq:dimension}
 \min\{d\in\Z\mid
       \Hom_{\D(A^e)}(A^\vee,A[d])\ne0\}=n.
\end{equation}
\end{lemma}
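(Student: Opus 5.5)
The plan is to rewrite the Hom groups in \eqref{eq:dimension} in terms of the complexes $C_1$ and then apply Corollary \ref{cor:sections}. Since $P=A^\vee[-n]$, we have
\[
 \Hom_{\D(A^e)}(A^\vee,A[d])\simeq\Hom_{\D(A^e)}(P[n],A[d])\simeq H^{d-n}\bigl(\RHom_{A^e}(P,A)\bigr)=H^{d-n}(C_1).
\]
Setting $k=d-n$, the claim \eqref{eq:dimension} becomes two statements: $H^k(C_1)=0$ for all $k<0$, and $H^0(C_1)\ne0$.

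For the vanishing, take $m=1$ in Corollary \ref{cor:sections}. It gives $H^k(C_1)=0$ for $k<\min\{0,n-2\}$. When $n\ge2$ we have $n-2\ge0$, so this minimum equals $0$. Hence $H^k(C_1)=0$ for every $k<0$, and therefore $\Hom(A^\vee,A[d])=0$ for all $d<n$.

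For the nonvanishing, Corollary \ref{cor:sections} says the section map $H^0(X,L)\to H^0(C_1)$, $s\mapsto\theta_s$, is injective for $n\ge2$. Because $H^0(X,L)\ne0$ by assumption, a nonzero section $s$ gives $\theta_s\ne0$ in $H^0(C_1)=\Hom(A^\vee,A[n])$. So the minimum in \eqref{eq:dimension} is exactly $n$.

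The only step that needs care is the first identification. One has to check that $C_1=\RHom_{A^e}(P^{\otimes_A1},A)=\RHom_{A^e}(P,A)$, and that the shift convention in $P=A^\vee[-n]$ produces $H^{d-n}$ rather than $H^{d+n}$. Apart from that bookkeeping, the argument is a direct reading of Corollary \ref{cor:sections}, whose real input is Lemma \ref{lem:kernel}.
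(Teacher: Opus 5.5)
Your proposal is correct and is essentially the paper's own argument. The paper likewise rewrites $\Hom_{\D(A^e)}(A^\vee,A[d])$ as $H^{d-n}(C_1)$ using $P=A^\vee[-n]$. It then reads off $H^k(C_1)=0$ for $k<0$ and $H^0(C_1)\neq0$ (from injectivity of the section map and $H^0(X,L)\neq 0$) directly from Corollary \ref{cor:sections}.
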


\begin{proof}
Corollary \ref{cor:sections} gives $C_1\in\D^{\ge0}(\C)$ and
$H^0(C_1)\ne0$. Since $Hom_{\D(A^e)}(A^\vee,A[d])=H^{d-n}(C_1),$ the minimum is $n$.
\end{proof}

The groups in \eqref{eq:dimension} are $H^0\Nat(S_{\cA_X},\id[d])$,
so the minimum is intrinsic to the enhanced category.
For the structure-sheaf orthogonal on $X=\PP^1$, the category
$\OO_X^\perp\simeq\Perf(\C)$ gives the value $0$, whereas the orthogonal
for a point is zero. Thus the equivalences in Theorem \ref{thm:main}
and Corollary \ref{cor:structure-sheaf} preserve dimension.

\section{Anticanonical divisors}

We first give the dg comparison used below.
Algebras under $A$ are called quasi-isomorphic under $A$ if they are
connected by a zigzag of dg-algebra quasi-isomorphisms commuting with
the maps from $A$.

\begin{lemma}\label{lem:comparison}
Let $A$ be a dg algebra and $P\in\D(A^e)$ invertible. Assume
\begin{equation}\label{eq:vanishing}
 H^{-1}(C_1)=0,\qquad
 H^{1-2m}(C_m)=H^{2-2m}(C_m)=0\quad(m\ge2).
\end{equation}
Suppose dg-algebra maps $A\to B_i$ fit into triangles
\[
 P\xrightarrow{\theta_i}A\longrightarrow B_i\longrightarrow P[1]
 \quad\text{in }\D(A^e),\qquad i=1,2.
\]
If $\theta_1=\theta_2$, then $B_1$ and $B_2$ are quasi-isomorphic under
$A$, compatibly with the displayed quotient identifications.
\end{lemma}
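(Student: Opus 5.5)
The plan is to treat $B_1$ and $B_2$ as two solutions of one extension problem: given $\theta\colon P\to A$, find a dg algebra under $A$ whose underlying bimodule is $\cone(\theta)$. I would show that the solution is unique up to quasi-isomorphism by an obstruction-theoretic cell argument in the model category of dg algebras under $A$. The key point is that every obstruction and ambiguity lands in one of the groups required to vanish in \eqref{eq:vanishing}.

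\textbf{Step 1: a cellular model.} Replace $A\to B_1$ by a cofibrant dg algebra under $A$, built by attaching bimodule cells.
\begin{enumerate}[label=(\roman*)]
\item Replacing $P$ by a cofibrant bimodule model, first form the quasi-free algebra $T_1=T_A(P[1])$, with differential on generators given by a cocycle representative of $\theta_1$.
\item As a bimodule, $T_1$ has an increasing filtration by tensor length. Its associated graded pieces are $P^{\otimes_A m}[m]$, and the length-one truncation is exactly $\cone(\theta_1)$.
\item The comparison map $T_1\to B_1$ is determined by choosing a null-homotopy of $P\to A\to B_1$, which exists by the triangle. Its failure to be a quasi-isomorphism comes from the length $\ge2$ part.
\item I would kill that part by attaching further cells, of the shape $P^{\otimes_A m}$ suitably shifted, for $m\ge2$. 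This produces a cofibrant $\widetilde B\xrightarrow{\sim}B_1$ under $A$, with a filtration whose $m$-th stage is controlled by $P^{\otimes_A m}$.
\end{enumerate}

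\textbf{Step 2: mapping into $B_2$ cell by cell.} I would then construct a dg algebra map $\widetilde B\to B_2$ under $A$, working one cell at a time.
\begin{enumerate}[label=(\roman*)]
\item On the first cell $P[1]$, the map is a null-homotopy of $P\xrightarrow{\theta_2}A\to B_2$. Since $\theta_1=\theta_2$, such a homotopy exists.
\item Two choices of homotopy differ by a class in $\Hom_{A^e}(P[1],B_2)$. Using the triangle for $B_2$, this group sits between $H^{-1}(C_1)$ and $\Hom_{A^e}(P,P)=H^0(\text{End})$.
\item I would choose the homotopy compatible with the identification $B_2/A\simeq P[1]$, which fixes the $P$-component. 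The remaining freedom lies in $H^{-1}(C_1)=0$, so the choice is essentially unique. This is the source of the clause ``compatibly with the displayed quotient identifications.''
\item For the cells of level $m\ge2$, the obstruction to extending, and the ambiguity of the extension, lie in groups $H^{k}\RHom_{A^e}(P^{\otimes_A m},B_2)$ for two consecutive values of $k$ determined by the shift $m$.
\item Applying $\RHom_{A^e}(P^{\otimes_A m},-)$ to $P\to A\to B_2$ gives a long exact sequence involving $C_m$ and $\RHom_{A^e}(P^{\otimes_A m},P)\simeq C_{m-1}$, using invertibility of $P$. Tracking degrees, the relevant groups reduce to $H^{1-2m}(C_m)$ and $H^{2-2m}(C_m)$, with the $C_{m-1}$ terms absorbed by the previous stage.
\end{enumerate}
By \eqref{eq:vanishing} these groups vanish, so the extension exists at every stage.

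\textbf{Step 3: the map is a quasi-isomorphism.} The composite $\widetilde B\to B_2$ is a map under $A$. On the bimodule level it restricts to the identity of $A$ and to the chosen identification on $P[1]$. Hence it induces a map of triangles from $P\to A\to B_1$ to $P\to A\to B_2$ that is an isomorphism on the two outer terms, so it is a quasi-isomorphism. Together with $\widetilde B\xrightarrow{\sim}B_1$, this gives the required zigzag under $A$.

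\textbf{Main obstacle.} The hardest step is the degree bookkeeping in Step 2. One has to pin down exactly which shift the $m$-th cell carries, so that the obstruction and ambiguity land in $H^{1-2m}(C_m)$ and $H^{2-2m}(C_m)$ rather than in neighbouring degrees. One also has to check that the connecting contributions from $C_{m-1}$ genuinely cancel against the previous stage. To get this right I would first compute with an $A_\infty$-model, namely minimal $A_\infty$-structures on $A\oplus P[1]$ extending $A$. There the higher product $m_k$ restricted to $P[1]^{\otimes k}$ is a Hochschild-type cochain with values in $C_m$. From that model I would read off the degrees, and then transport them back to the cellular argument.
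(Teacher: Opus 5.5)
Your strategy is the paper's: a cofibrant model of $B_1$ under $A$ whose $m$-th layer of generators is a shift of $P^{\otimes_A m}$, then an algebra map to $B_2$ built layer by layer, with obstructions read off from $\RHom_{A^e}(P^{\otimes_A m},-)$ applied to $A\to B_2\to P[1]$, then a final check on $A$ and $P[1]$. Your first-cell step is the paper's Step 1. The gap is Step 1(iv), which you assert without proof and which carries the whole argument. Attaching cells to kill the length-$\ge2$ homology of $T_A(P[1])$ creates new mixed words. A generic cell-attachment argument gives no control over the resulting cells. Yet Step 2 needs exactly one layer of a known shift at each level, and Step 3 needs the first layer $A\oplus P[1]$ to include quasi-isomorphically.

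The paper supplies this with the relative bar--cobar algebra $R=T_A\bigl(\bigoplus_{r\ge1}Q_1^{\otimes_A r}[r-1]\bigr)$, built on a split semi-free model $B_1=A\oplus Q_1$. Filtering by total tensor length, $\operatorname{gr}_N R\cong Q_1^{\otimes_A N}\otimes(\C\xrightarrow{\id}\C)^{\otimes(N-1)}$, which is contractible for $N\ge2$. Hence $B_1=F_1R\hookrightarrow R$ is a quasi-isomorphism. In your inductive language: once layers $1,\dots,N-1$ are attached, the weight-$N$ part is this cube with its no-cut vertex removed. Its homology is a single copy of $Q_1^{\otimes_A N}[N-2]$, killed by one new layer $Q_1^{\otimes_A N}[N-1]$. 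This pins the $m$-th layer as $Q_1^{\otimes_A m}[m-1]\simeq P^{\otimes_A m}[2m-1]$.

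With this shift, the bookkeeping you flag as the main obstacle is immediate, but it does not work the way you describe. The obstruction at layer $m\ge2$ lies in $H^{2-2m}\RHom_{A^e}(P^{\otimes_A m},B_2)$. This group sits between $H^{2-2m}(C_m)$ and $H^{2-2m}\RHom_{A^e}(P^{\otimes_A m},P[1])\cong H^{3-2m}(C_{m-1})$. The $C_{m-1}$ term is not cancelled against earlier choices; it is itself one of the hypotheses in \eqref{eq:vanishing}. Namely, it is $H^{-1}(C_1)$ for $m=2$ and $H^{1-2(m-1)}(C_{m-1})$ for $m\ge3$.

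So $H^{-1}(C_1)=0$ is what makes the second layer extendable. At the first cell it only buys a uniqueness you never use: a homotopy compatible with the quotient identification exists by the morphism-of-triangles axiom, exactly as in the paper's Step 1. Likewise, drop the claim that the ambiguity at layer $m$ vanishes. That group involves $H^{2-2m}(C_{m-1})$, which is not among the hypotheses, and the lemma only asks for existence.
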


\begin{proof}
\emph{Step 1: Comparison of the bimodule extensions.}
After taking relative semi-free resolutions, we may assume that
$A\subset B_i$ splits as a graded $A$-bimodule and that
$Q_i=B_i/A$, together with its tensor powers, is cofibrant.
Write $B_i=A\oplus Q_i$, with differential
\[
d_{B_i}=
\begin{pmatrix}
d_A & t_i\\
0 & d_{Q_i}
\end{pmatrix},
\qquad t_i\in\operatorname{Hom}^1_{A^e}(Q_i,A),
\]
where $t_i$ belongs to the degree-1 component of the Hom complex. Choose a quasi-isomorphism $g:Q_1\to Q_2$ representing the prescribed identification of the quotients. Since $\theta_1=\theta_2$, the corresponding extension classes agree, so
\[
t_1-t_2g=d_Ah-hd_{Q_1}
\]
for some $h\in\operatorname{Hom}^0_{A^e}(Q_1,A)$.
Consequently,
\[
f:B_1\longrightarrow B_2,\qquad
f(a,q)=(a+h(q),g(q))
\]
is a dg-bimodule map restricting to $\mathrm{id}_A$ and
inducing $g$ on the quotients. Thus $f$ is a
quasi-isomorphism. It remains to lift it
to dg algebras.

\smallskip
\noindent
\emph{Step2: A relative algebra resolution.}
Let $R$ be the normalized relative bar--cobar algebra of $B_1$ under $A$.
It is generated over $A$ by tensor words $[b_1|\cdots|b_r]$, with tensors
over $A$, of degree $\sum_j |b_j|+1-r$, subject to
\[
    [a]=a\quad(a\in A),\qquad
    [b_1|\cdots|b_r]=0
    \quad\text{if }r>1\text{ and some }b_j\in A.
\]
The differential on $R$ is the standard bar--cobar differential. The normalization relations are preserved
by this differential, and the dg-algebra identities give $d_R^2=0$.
The construction is relative to $A$ and does not require
an augmentation $B_1\to A$. As a graded algebra,
\[
    R=T_A\!\left(\bigoplus_{r\geq1}
        Q_1^{\otimes_A r}[r-1]\right).
\]
On a generator of length $r$, the differential is the sum of the
internal differential on $Q_1^{\otimes_A r}[r-1]$ and terms involving
only elements of $A$ and generators of smaller length.

There is a dg-algebra map
\[
    \epsilon\colon R\longrightarrow B_1,
    \qquad [b]\longmapsto b,
    \qquad [b_1|\cdots|b_r]\longmapsto0\quad(r>1).
\]
Equip $R$ with the increasing filtration $F_\bullet R$
by total tensor length, assigning weight $r$ to
$Q^{\otimes_A r}[r-1]$ and extending additively
to products. Then $F_1R$ is isomorphic to $B_1$
as a complex of $A$-bimodules via $b\mapsto[b]$. For $N\geq2$, the
$N$th associated graded complex is
\[
    Q_1^{\otimes_A N}\otimes_{\mathbb C}
    \left(\mathbb C\xrightarrow{\mathrm{id}}\mathbb C\right)
    ^{\otimes(N-1)},
\]
where each two-term factor lies in degrees $-1$ and $0$ and records
whether a cut is inserted at the corresponding position. These
complexes are contractible. Since $\operatorname{gr}^F_N R$ is acyclic for every $N\ge 2$,
the inclusions $F_1R\hookrightarrow F_NR$ are
quasi-isomorphisms for all $N\ge 1$.
As $R=\varinjlim_N F_NR$ and filtered colimits are exact,
the inclusion $j:B_1\cong F_1R\hookrightarrow R$
is also a quasi-isomorphism. Since $\epsilon$ restricts to the
identity on this subcomplex, it is a quasi-isomorphism.

\smallskip
\noindent
\emph{Step3: Lifting to an algebra map.}
For $r\geq2$, apply
$\mathbf R\operatorname{Hom}_{A^e}(Q_1^{\otimes_A r},-)$
to the triangle $A\to B_2\to P[1]$. Since $Q_1\simeq P[1]$ and $P$
is invertible, the resulting long exact sequence contains
\[
    H^{2-2r}(C_r)\longrightarrow
    H^{2-r}\mathbf R\operatorname{Hom}_{A^e}
        (Q_1^{\otimes_A r},B_2)
    \longrightarrow H^{3-2r}(C_{r-1}).
\]
Both outer terms vanish by hypothesis; when $r=2$, the right-hand term
is $H^{-1}(C_1)$. Consequently,
\[
    H^{2-r}\operatorname{Hom}_{A^e}
        (Q_1^{\otimes_A r},B_2)=0
    \qquad(r\geq2).
\]

We construct $F\colon R\to B_2$ by induction on generator length,
starting with the identity on $A$ and $f$ on length-one generators.
Suppose that $F$ has been defined on generators of length less than
$r$ and commutes with their differentials. Apply $F$ to the
shorter-generator terms in the differential of a length-$r$ generator.
This gives a degree-$(2-r)$ cochain in
$\operatorname{Hom}_{A^e}(Q_1^{\otimes_A r},B_2)$.
It is closed because $d_R^2=0$ and $F$ already commutes with the
differential at shorter lengths. The vanishing above provides a
primitive of degree $1-r$. Taking this primitive as the value of $F$
on $Q_1^{\otimes_A r}[r-1]$ extends $F$ to length $r$ and makes it
commute with the differential there. Induction therefore defines a
dg-algebra map $F$ on all of $R$.

The restriction of $F$ to the subcomplex $B_1\subset R$ is $f$.
Since this inclusion and $f$ are quasi-isomorphisms, so is $F$.
We therefore obtain a zigzag of dg-algebra quasi-isomorphisms under $A$,
\[
    B_1\xleftarrow{\ \epsilon\ }R\xrightarrow{\ F\ }B_2.
\]
Its induced bimodule isomorphism is represented by $f$, so it is
compatible with the prescribed quotient identifications. This completes the proof.
\end{proof}

Return to the geometric notation \eqref{eq:notation}.
For $0\ne s\in H^0(X,L)$, let
$j_s$ be the inclusion of the zero divisor $D_s=V(s)\hookrightarrow X$. Then set $E_s=\Lder j_s^*E$, and
\[
 B_s=\RHom_{D_s}(E_s,E_s).
\]
Restriction of endomorphisms gives a unital dg-algebra morphism $A\to B_s$. No smoothness or reducedness assumption is imposed on $D_s$.

The following calculation is related to the projector-kernel computations for hypersurfaces in \cite[Section 3]{Pirozhkov}. 

\begin{lemma}\label{lem:geometric}
The restriction map $A\to B_s$ fits into a bimodule triangle
\begin{equation}\label{eq:geometric-triangle}
 P\xrightarrow{\theta_s}A\longrightarrow B_s\longrightarrow P[1].
\end{equation}
Moreover, if $\operatorname{Supp}(\mathcal{E})\not\subseteq D_s$, then
$\Perf(B_s)\simeq\Perf(D_s)$.
\end{lemma}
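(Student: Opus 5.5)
Two statements have to be proved: the bimodule triangle \eqref{eq:geometric-triangle}, and the equivalence $\Perf(B_s)\simeq\Perf(D_s)$.

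\textbf{The triangle.} Start from the Koszul sequence $L^{-1}\xrightarrow{s}\OO_X\to j_{s*}\OO_{D_s}$. Tensoring with $E$ gives an exact triangle
\[
 E\otimes\omega_X\xrightarrow{s}E\to j_{s*}E_s,
\]
natural in $E$. Apply $\RHom_X(E,-)$ and use adjunction $\RHom_X(E,j_{s*}E_s)\simeq\RHom_{D_s}(E_s,E_s)=B_s$. This yields
\[
 \RHom_X(E,E\otimes\omega_X)\xrightarrow{s}A\to B_s.
\]
The second map is the restriction $A\to B_s$, because adjunction sends $\id$ to the unit $E\to j_{s*}j_s^*E$.

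Next, $\RHom_X(E,E\otimes\omega_X)\simeq\RHom_X(E,E[n])^\vee[n]\cdots$; more precisely, Serre duality on $X$ gives $\RHom_X(E,E\otimes\omega_X)\simeq\RHom_X(E,E)^\vee[-n]=P$. Using the Serre functor of $\cA_X$ is unnecessary here, since $E\in\cA_X$ and $\RHom$ is computed in $X$.

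Bimodule structures: $A$ acts on both sides of all three terms by pre- and post-composition, and multiplication by $s$ and the adjunction are natural, so everything is $A^e$-linear. The Serre duality isomorphism is functorial in both arguments, so it is an isomorphism in $\D(A^e)$. The first map then becomes $\theta_s$ by the definition of $\theta_s$. To make this honest at the dg level, I would work with h-injective or Čech models of $E$ on which $A$ acts strictly, and realize the triangle as a short exact sequence of dg bimodules.

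\textbf{The equivalence.} I would show that $E_s$ classically generates $\Perf(D_s)$; then $\Perf(D_s)\simeq\Perf(\RHom(E_s,E_s))=\Perf(B_s)$ by standard Morita theory. Since $E$ generates $\cA_X$, the object $\mathcal{E}$ is exceptional, and $\Perf(X)=\langle\cA_X,\mathcal{E}\rangle$, the pair $E\oplus\mathcal{E}$ generates $\Perf(X)$. Because $\Lder j_s^*$ is essentially surjective up to thick closure (the images of the $L^k$ pullbacks generate, as $L|_{D_s}$ is ample), $j_s^*E\oplus j_s^*\mathcal{E}$ generates $\Perf(D_s)$. It therefore suffices to prove $j_s^*\mathcal{E}\in\thick(E_s)$.

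\textbf{Main obstacle: showing $j_s^*\mathcal{E}\in\thick(E_s)$.} Here I would use the triangle $\mathcal{E}\otimes\omega_X\to\mathcal{E}\to j_{s*}j_s^*\mathcal{E}$, together with the fact that $\RHom(\mathcal{E},\mathcal{E}\otimes\omega_X)$ vanishes or is controlled. The first attempt is to project $\mathcal{E}\otimes\omega_X$ and $\mathcal{E}$: write $\mathcal{E}\otimes\omega_X=$ an extension of $V\otimes\mathcal{E}$ by an object $G$ of $\cA_X$, where $V=\RHom(\mathcal{E}\otimes\omega_X,\mathcal{E})^\vee$-type data. Restricting to $D_s$, $j_s^*(\mathcal{E}\otimes\omega_X)\simeq j_s^*\mathcal{E}\otimes\omega_X|_{D_s}$.

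An alternative route avoids that projection. A perfect object $F$ on $D_s$ that is right orthogonal to $E_s$ satisfies $\RHom_X(E,j_{s*}F)=0$, so $j_{s*}F\in\langle\mathcal{E}\rangle^{\perp\perp}$, meaning $j_{s*}F\simeq W\otimes\mathcal{E}$ for some complex $W$. A torsion object supported on $D_s$ can be isomorphic to $W\otimes\mathcal{E}$ only if $W=0$ or $\operatorname{Supp}\mathcal{E}\subseteq D_s$. The hypothesis excludes the latter, so $F=0$. Hence $E_s$ generates $\D_{\mathrm{qc}}(D_s)$ in the compact sense (extending the argument to unbounded $F$ via right adjoints), and Neeman's theorem gives $\thick(E_s)=\Perf(D_s)$. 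This second route is the one I would actually use.

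The main subtlety is that $\mathcal{E}$ is only a sheaf, and its support might meet $D_s$ badly. The support condition ensures that $W\otimes\mathcal{E}$ is not supported in $D_s$ unless $W=0$, since a nonzero cohomology of $W\otimes\mathcal{E}$ is a direct sum of copies of $\mathcal{E}$.
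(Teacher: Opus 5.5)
Your derivation of the triangle is the same as the paper's: resolve $j_{s*}\OO_{D_s}$ by the Koszul complex, tensor with $E$, apply $\RHom_X(E,-)$ with adjunction, and identify the first term with $P$ by Serre duality.

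For $\Perf(B_s)\simeq\Perf(D_s)$, your second route is genuinely different from the paper's, and it works after one correction. The condition $\RHom_X(E,G)=0$ puts $G$ in the right orthogonal $\cA_X^\perp$. For $\Db(X)=\langle\cA_X,\mathcal{E}\rangle$ this is the Serre-twisted piece $S_X\langle\mathcal{E}\rangle=\langle\mathcal{E}\otimes\omega_X\rangle$, not $\langle\mathcal{E}\rangle$. For example, with $\mathcal{E}=\OO_{\PP^1}$ and $E=\OO(-1)$ one has $\RHom(E,\OO)\neq0$ but $\RHom(E,\OO(-2))=0$. So the correct statement is $j_{s*}F\simeq W\otimes\mathcal{E}\otimes\omega_X$. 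Twisting by $\omega_X$ does not change supports, so your support argument is unaffected.

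The phrase ``via right adjoints'' should also be made explicit. This step is really needed, because vanishing of the orthogonal inside $\Perf(D_s)$ alone does not give classical generation when $D_s$ is singular. For $G\in\D_{\mathrm{qc}}(X)$ with $\RHom(E,G)=0$, consider the cone of the evaluation map $\RHom(\mathcal{E}\otimes\omega_X,G)\otimes\mathcal{E}\otimes\omega_X\to G$. This cone is orthogonal to both $E$ and $\mathcal{E}\otimes\omega_X$. These two objects classically generate $\Perf(X)$ by $\Db(X)=\bigl\langle\langle\mathcal{E}\otimes\omega_X\rangle,\cA_X\bigr\rangle$, so they compactly generate $\D_{\mathrm{qc}}(X)$, and the cone vanishes. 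Since $W$ is quasi-isomorphic to the direct sum of its cohomology, each cohomology sheaf of $j_{s*}F$ is a direct sum of copies of $\mathcal{E}\otimes\omega_X$, and your conclusion $F=0$ follows.

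The paper instead completes your abandoned first route. It picks $x\in\operatorname{Supp}(\mathcal{E})\setminus D_s$ and uses the mutation triangle $W_x\otimes\mathcal{E}\to\OO_x\to M_x$, where $W_x=\RHom_X(\mathcal{E},\OO_x)\neq0$ and $M_x\in\cA_X$. Restricting to $D_s$ kills $\OO_x$, so $\Lder j_s^*M_x\simeq W_x\otimes\Lder j_s^*\mathcal{E}[1]$, which puts $\Lder j_s^*\mathcal{E}$ in $\thick(E_s)$. Restricted powers of an ample line bundle (Orlov) then generate $\Perf(D_s)$. That argument is constructive and stays inside $\Perf$, but it needs the ample-generation theorem. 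Yours avoids ampleness and reads the support hypothesis as an orthogonality statement, at the cost of working in $\D_{\mathrm{qc}}$ and invoking Neeman's theorem that $\thick(E_s)=\D_{\mathrm{qc}}(D_s)^c=\Perf(D_s)$ for a compact generator. At heart both use the hypothesis the same way: no nonzero object built from $\mathcal{E}$ can be supported on $D_s$.
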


\begin{proof}
Since $X$ is integral and $s\ne0$, the Koszul dg algebra
$K_s=[\omega_X\xrightarrow{s}\OO_X]$, in degrees $-1,0$, resolves
$j_{s*}\OO_{D_s}$. Notice that
\[
 B_s\simeq\R\Gamma\!\left(
       X,\R\mathcal Hom_X(E,E)\otimes K_s\right).
\]
Now consider the bimodule triangle 
\[
R\operatorname{Hom}_X(E,E\otimes\omega_X)
\longrightarrow
R\operatorname{Hom}_X(E,E)
\longrightarrow
B_s
\longrightarrow
R\operatorname{Hom}_X(E,E\otimes\omega_X)[1].
\]
Its first term can be identified with
$A^\vee[-n]=P$ as a bimodule, and its map to
$R\operatorname{Hom}_X(E,E)$ is $\theta_s$ by definition.
This proves \eqref{eq:geometric-triangle}.

Assume $\operatorname{Supp}(\mathcal{E})\not\subseteq D_s$.
Choose $x\in\operatorname{Supp}(\mathcal{E})\setminus D_s$, and put
$M_x=Q(\OO_x)\in\cA_X=\thick(E)$.
Since $\mathcal{E}$ is a nonzero coherent sheaf at $x$, the complex
$W_x:=\RHom_X(\mathcal{E},\OO_x)$ is nonzero. The mutation triangle is
\[
 W_x\otimes \mathcal{E}\longrightarrow\OO_x\longrightarrow M_x.
\]
Restricting to $D_s$ gives
\[
 \Lder j_s^*M_x\simeq
 W_x\otimes\Lder j_s^*\mathcal{E}[1]\in\thick(E_s).
\]
As $W_x$ is a nonzero bounded complex of finite-dimensional vector
spaces, it is a direct sum of shifts of $\C$. Hence
$\Lder j_s^*\mathcal{E}\in\thick(E_s)$.
Since $\Perf(X)=\thick(E,\mathcal{E})$, the restrictions of all perfect
complexes on $X$ lie in $\thick(E_s)$.
In particular, it contains every power of the restriction of a very
ample line bundle. These generate $\Perf(D_s)$ due to \cite[Theorem 4]{OrlovGenerators}.
Thus $\thick(E_s) = \Perf(D_s)$, which gives the stated Morita equivalence.
\end{proof}

For $n\ge3$, the pair $(A,P)$ satisfies the vanishings
\eqref{eq:vanishing}.
Indeed, Corollary \ref{cor:sections} gives $H^{-1}(C_1)=0$.
For $m\ge2$, both $1-2m$ and $2-2m$ are negative, and
\[
 (n-m-1)-(2-2m)=n+m-3>0.
\]
The remaining vanishings follow from the same way.

\section{Proof of the main result}

Now we are ready to prove the main result of the paper. 

\begin{proof}[Proof of Theorem \ref{thm:main}]
Let $\Phi\colon\cA_X\simeq\cA_Y$ be the enhanced equivalence.
By Lemma \ref{lem:dimension}, $\dim X=\dim Y=n$.
Choose $E_X$ and $E_Y=\Phi(E_X)$ as generators.
Their endomorphism algebras are quasi-isomorphic and we can choose a common
cofibrant dg algebra $A$ mapping quasi-isomorphically to both:
\[
A_X \xleftarrow{\sim} A \xrightarrow{\sim} A_Y.
\]
Via the maps $A\to A_X$ and $A\to A_Y$, regard $B_s^X$ and
$B_t^Y$ as dg algebras under $A$.

Now identify the first terms of their triangles \eqref{eq:geometric-triangle} with the bimodule $P=A^\vee[-n]$.
In this way, Corollary \ref{cor:sections} defines a linear isomorphism
\[
 \varphi\colon H^0(X,\omega_X^{-1})
       \xrightarrow{\ \sim\ }H^0(Y,\omega_Y^{-1}).
\]
The boundary classes corresponding to $s$ and $\varphi(s)$ agree.
Lemma \ref{lem:comparison} therefore shows that $B_s^X$ and
$B_{\varphi(s)}^Y$ are quasi-isomorphic under $A$.
For $Z=X,Y$, let
\[
 U_Z:=\{[s]\in\PP H^0(Z,\omega_Z^{-1})\mid
 \operatorname{Supp}(\mathcal{E}_Z)\not\subseteq D_s\}.
\]
Each $U_Z$ is the complement of a proper projective linear subspace.
On the common open set
$U=U_X\cap\PP(\varphi)^{-1}(U_Y)$, Lemma \ref{lem:geometric} gives
\[
 \Perf(D_s)\simeq\Perf(D_{\varphi(s)})\qquad([s]\in U).
\]
A proper scheme over $\C$ is smooth if and only if its perfect dg category is homologically smooth \cite[Proposition 3.31]{OrlovSmooth}. Thus $D_s$ is smooth if and only if $D_{\varphi(s)}$ is smooth for $[s]\in U$. For $Z=X,Y$, set
\[\Sigma_Z:=\{[s]\in\PP H^0(Z,\omega_Z^{-1})\mid D_s\text{ is singular}\}.\]
Hence $\PP(\varphi)$ identifies $\Sigma_X\cap U$ with
$\Sigma_Y\cap\PP(\varphi)(U)$.
For the anticanonical embeddings, $\Sigma_X=X^\vee$ and
$\Sigma_Y=Y^\vee$: the divisor $D_s$ is singular at $x$ precisely
when the corresponding hyperplane contains $T_xX$.
These dual varieties are irreducible and nondegenerate, since the
anticanonical embeddings are smooth, nondegenerate, and nonlinear.
Thus both discriminants meet the common open set densely, and taking
closures gives $\PP(\varphi)(\Sigma_X)=\Sigma_Y$.
Finally, by projective biduality one has $\PP(\varphi^\vee)(Y)=X$.
\end{proof}

\begin{proof}[Proof of Corollary \ref{cor:structure-sheaf}]
Take $\mathcal{E}_X=\OO_X$ and $\mathcal{E}_Y=\OO_Y$, so that
$\cA_X=\OO_X^\perp$ and $\cA_Y=\OO_Y^\perp$.
The equivalence preserves dimension by Lemma \ref{lem:dimension} and
the paragraph following it. In dimension at least three, apply
Theorem \ref{thm:main}.
In dimensions zero and one the only Fano manifolds are a point and $\PP^1$, respectively.

Suppose $\dim X=2$. Then $X$ and $Y$ are del Pezzo surfaces of degree
at least three. Additivity of Hochschild homology
\cite[Theorem 7.3]{Kuznetsov} gives
\[
 \dim_\C\HH_0(\cA_X)=11-K_X^2,
\]
so $K_X^2=K_Y^2=d$.
For $d=3$ and $d=4$, apply
\cite[Corollary 1.6]{LinZhang} and
\cite[Theorem 3.1]{Elagin}, respectively.
For $d=5,6,7,9$, there is a unique complex isomorphism class
\cite[Chapter 8]{Dolgachev}.

In degree eight it remains to distinguish $\PP^1\times\PP^1$ from
$\mathbb F_1$. Write $\operatorname{ch}(F)=(\rk(F),c_1(F),c_2(F))$.
For $F\in\cA_X$, Riemann--Roch and $\chi(F)=0$ give
$c_2(F)=\tfrac12K_X\cdot c_1(F)-\rk(F)$.
Substituting this into the Euler pairing yields, for $F,G\in\cA_X$,
\[ \chi(F,G)+\chi(G,F)
 =-2\rk(F)\rk(G)+\rk(F)K_X\cdot c_1(G)+\rk(G)K_X\cdot c_1(F)-2c_1(F)\cdot c_1(G).\]
On $\PP^1\times\PP^1$, the canonical class is divisible by two,
so the above equation is always even.
On $\mathbb F_1$, let $C$ be the exceptional curve and choose $x\notin C$.
Both $I_x$ and $\OO_C(-1)$ are acyclic, and their symmetric Euler
pairing is $K_X\cdot C=-1$.
An equivalence preserves this pairing, so these two orthogonals are
not equivalent.
\end{proof}

The dimension restriction in Theorem \ref{thm:main} cannot be removed for arbitrary exceptional coherent sheaves, as the following example shows.
\begin{remark}
Let $X$ and $Y$ be nonisomorphic smooth complex del Pezzo
surfaces of degree $4$. Their anticanonical bundles are very ample.
Contracting a $(-1)$-curve on each surface gives morphisms
\[
\pi_X \colon X \longrightarrow S,
\qquad
\pi_Y \colon Y \longrightarrow S
\]
to the same del Pezzo surface $S$ of degree $5$, since such a surface
is unique up to isomorphism over $\mathbb C$
\cite[Chapter~8]{Dolgachev}.

Let $C_X$ and $C_Y$ be the exceptional curves, and set
$E_X=\mathcal O_{C_X}$ and $E_Y=\mathcal O_{C_Y}$, viewed as sheaves on
$X$ and $Y$, respectively. These objects are exceptional, and
Orlov's blow-up formula \cite{OrlovBlowup} gives
\[
D^b(Z)
=
\bigl\langle L\pi_Z^*D^b(S),E_Z\bigr\rangle,
\qquad Z\in\{X,Y\}.
\]
Consequently, $E_X^\perp \simeq D^b(S) \simeq E_Y^\perp.$
The resulting equivalence is the restriction of the Fourier--Mukai
functor $L\pi_Y^*\circ R\pi_{X*}$, although $X\not\simeq Y$.
\end{remark}

\section{Further directions}\label{sec:further-directions}

The first question is whether the very-ampleness assumption in
Theorem~\ref{thm:main} can be removed, as predicted by Perry's
conjecture recalled in the introduction
\cite[Conjecture 1.1]{LinRampazzoZhang}.
More generally, it is natural to ask whether positivity is needed only to recover the variety, rather than its derived category. 

\begin{conjecture}
\label{conj:derived-reconstruction}
Let $X$ and $Y$ be smooth connected complex projective varieties
such that $\OO_X$ and $\OO_Y$ are exceptional. If
\[
 \OO_X^\perp\simeq\OO_Y^\perp,
\]
then $\Db(X)\simeq\Db(Y)$.
\end{conjecture}

A stronger formulation concerns the gluing to the structure sheaf.
For the inclusion $\iota_X\colon \OO_X^\perp\hookrightarrow\Db(X)$,
let $ G_X:=\iota_X^!\OO_X\in\OO_X^\perp.$ The adjunction isomorphism
\[
 \RHom_{\OO_X^\perp}(F,G_X)\simeq\RHom_X(F,\OO_X)
\]
 where $F\in \OO_X^\perp$, shows that $G_X$ represents the gluing module. Thus the enhanced pair
$(\OO_X^\perp,G_X)$ determines $\Db(X)$ together with its distinguished
exceptional object, by dg gluing \cite{OrlovSmooth}.

\begin{conjecture}
\label{conj:gluing-reconstruction}
Let $X$ and $Y$ be as in
Conjecture~\ref{conj:derived-reconstruction}. If
$\OO_X^\perp\simeq\OO_Y^\perp$, then there exists an equivalence
\[
 \Psi\colon\OO_X^\perp\xrightarrow{\sim}\OO_Y^\perp
 \qquad\text{such that}\qquad
 \Psi(G_X)\simeq G_Y.
\]
\end{conjecture}

This would imply Conjecture~\ref{conj:derived-reconstruction} and,
more precisely, yield an equivalence sending $\OO_X$ to $\OO_Y$. For Fourier-Mukai type equivalence, Conjectures~\ref{conj:derived-reconstruction}
and~\ref{conj:gluing-reconstruction} hold for complex Enriques
surfaces, as a consequence of Li--Nuer--Stellari--Zhao
\cite[Theorem~5.1]{LiNuerStellariZhao}, applied to the decomposition
$\Db(X)=\langle\OO_X^\perp,\OO_X\rangle$.

A different question arises after removing the entire index block.
For a smooth Fano manifold with $-K_X=i_XH_X$, where $i_X$ is the
Fano index, set
\[
 \mathcal R_X:=
 \left\langle
 \OO_X,\OO_X(H_X),\ldots,
 \OO_X((i_X-1)H_X)
 \right\rangle^\perp.
\]

\begin{conjecture}
\label{conj:birational-reconstruction}
Let $X$ and $Y$ be smooth complex Fano manifolds of the same dimension. If
$\mathcal R_X\simeq\mathcal R_Y$ by a $\C$-linear equivalence
of Fourier--Mukai type, then $X$ and $Y$ are birational (or weaker, stably birational).
\end{conjecture}

For index one, Perry's conjecture predicts the stronger conclusion
$X\simeq Y$. For cubic fourfolds, the statement specializes to the
conjecture that Fourier--Mukai partners are birational; see
\cite[Conjecture 1.1]{BohningBothmerMarquand}.
The existence of nonisomorphic cubic fourfolds with equivalent
components shows why an isomorphism conclusion cannot be expected
in this generality.

\end{document}